\newtheorem{thm}{Theorem}[section]
\newtheorem{lemma}[thm]{Lemma}
\newtheorem{cor}[thm]{Corollary}
\theoremstyle{remark}
\newtheorem{rem}[thm]{Remark}
\newtheorem{ex}[thm]{Example}
\theoremstyle{definition}
\newtheoremstyle{Claim}{}{}{\itshape}{}{\itshape\bfseries}{:}{ }{#1}
\theoremstyle{Claim}
\newcommand{\R}{\mathbb{R}}
\newcommand{\eps}{\varepsilon}
\theoremstyle{plain}
\begin{document}

\title[]{Interior H\"older and Calder\'on-Zygmund estimates for fully nonlinear equations with natural gradient growth}

\author{Alessandro Goffi}
\address{Dipartimento di Matematica e Informatica ``Ulisse Dini'', Universit\`a degli Studi di Firenze, 
viale G. Morgagni 67/A, 50134 Firenze (Italy)}
\curraddr{}
\email{alessandro.goffi@unifi.it}

\subjclass[]{}

 \thanks{
The author wishes to thank Luca Rossi and Andrzej {\'S}wi{\c{e}}ch for some comments on the first draft of the manuscript and for pointers to references. The author is member of the Gruppo Nazionale per l'Analisi Matematica, la Probabilit\`a e le loro Applicazioni (GNAMPA) of the Istituto Nazionale di Alta Matematica (INdAM). The author was partially supported by the INdAM-GNAMPA Projects 2023 and 2024, by the King Abdullah University of Science and Technology (KAUST) project CRG2021-4674 ``Mean-Field Games: models, theory and computational aspects" and by the project funded by the EuropeanUnion – NextGenerationEU under the National Recovery and Resilience Plan (NRRP), Mission 4 Component 2 Investment 1.1 - Call PRIN 2022 No. 104 of February 2, 2022 of Italian Ministry of University and Research; Project 2022W58BJ5 (subject area: PE - Physical Sciences and Engineering) ``PDEs and optimal control methods in mean field games, population dynamics and multi-agent models" .}

\date{\today}

\subjclass[2020]{35B45, 35B65, 35D40.}

\keywords{Hamilton-Jacobi-Bellman equations, Harnack inequality, Hölder regularity, Maximal $L^q$-regularity}

\date{\today}

\begin{abstract}
We establish local H\"older estimates for viscosity solutions of fully nonlinear second order equations with quadratic growth in the gradient and unbounded right-hand side in $L^q$ spaces, for an integrability threshold $q$ guaranteeing the validity of the maximum principle. This is done through a nonlinear Harnack inequality for nonhomogeneous equations driven by a uniformly elliptic Isaacs operator and perturbed by a Hamiltonian term with natural growth in the gradient. As a byproduct, we derive a new Liouville property for entire $L^p$ viscosity solutions of fully nonlinear equations as well as a nonlinear Calder\'on-Zygmund estimate for strong solutions of such equations. 
\end{abstract}

\maketitle

\section{Introduction}

The aim of this note is the study of regularity properties for suitable solutions of the fully nonlinear elliptic equation
\begin{equation}\label{HJell}
F(x,D^2u)+H(x,u,Du)=0\text{ in }\Omega\subset\R^n,
\end{equation}
where $F$ is measurable in $x$ and uniformly elliptic with given ellipticity constants $0<\lambda\leq\Lambda$, i.e.
\[
\mathcal{M}_{\lambda,\Lambda}^-(M-N)\leq F(x,M)-F(x,N)\leq \mathcal{M}_{\lambda,\Lambda}^+(M-N),\ M,N\in\mathcal{S}_n,
\]
$\mathcal{M}_{\lambda,\Lambda}^\pm$ being the Pucci's extremal operators, and $H$ satisfies the following natural growth condition
\begin{equation}\label{assH}
|H(x,u,Du)|\leq C_{H}|Du|^2+f(x).
\end{equation}
Here, $f\in L^q$ with integrability $q>q_{E}$, $q_E=q_E(n,\Lambda/\lambda)\in[n/2,n)$ is the so-called Escauriaza constant, which determines the range in which the maximum principle holds and depends upon the so-called ellipticity $\Lambda/\lambda$, cf. \cite{CrandallSwiech,Escauriaza,FabesStroock}. The Pucci's extremal operators are defined respectively as $\mathcal{M}_{\lambda,\Lambda}^+(M)=\sup\{\mathrm{Tr}(AM):\ A\in\mathcal{S}_n,\ \lambda I_n\leq A\leq \Lambda I_n\}$ and  $\mathcal{M}_{\lambda,\Lambda}^-(M)=\inf\{\mathrm{Tr}(AM):\ A\in\mathcal{S}_n,\ \lambda I_n\leq A\leq \Lambda I_n\}$, where $\mathcal{S}_n$ is the set of $n\times n$ symmetric matrices. \\
The first step of our program is a low-regularity estimate in $C^\alpha$ spaces, $\alpha\in(0,1)$: under the above assumptions, we first prove that $L^p$ viscosity solutions of \eqref{HJell} are $C^\alpha_{\mathrm{loc}}$, i.e. they satisfy
\[
|u(x)-u(y)|\leq C|x-y|^\alpha,\ x,y\in B_\frac{R}{2},
\]
with $C$ depending on $n,q,\lambda,\Lambda,\|u\|_{L^\infty(B_R)},C_H$, via a nonlinear (invariant) Harnack inequality, see Section \ref{sec;holder}. This in turn amounts to prove that any positive and bounded $L^p$ viscosity solution $u$ to \eqref{HJell} satisfies
\[
\sup_{B_{\frac{R}{2}}}u\leq C\left(\inf_{B_\frac{R}{2}}u+R^{2-\frac{n}{q}}\|f\|_{L^q(B_R)}\right),
\]  
where $C$ is a positive constant depending on $n,q,\lambda,\Lambda,\|u\|_{L^\infty(B_R)},C_H$, cf. Theorem \ref{Harnack}, but not on $R$. The proof of the Harnack inequality is inspired from the corresponding counterpart available for divergence-type equations, cf. \cite{BensoussanFrehse,HanLin}, which in turn exploits the De Giorgi-Nash-Moser theory. The keystones of the proof of the $C^\alpha$ estimate, which has a linear flavor, are a weak Harnack inequality for supersolutions and a local maximum principle for subsolutions applied to suitable equations driven by a Pucci's extremal operator and with lower-order coefficients, cf. Section \ref{sec;prel}. This is done via exponential transformations applied to the viscosity formulation of \eqref{HJell} that can be regarded as fully nonlinear versions of the classical Hopf-Cole change of variable applied in the theory of viscous Hamilton-Jacobi equations. Notably, the invariant Harnack inequality implies also a new Liouville property for entire $L^p$ viscosity solutions of the following homogeneous equation
\[
F(x,D^2u)+H(Du)=0\text{ in }\R^n,
\]
where $F$ is any uniformly elliptic operator (e.g. in Isaacs form) and $H$ has quadratic growth, see Corollary \ref{liouville}. Recall that the classical Hopf-Cole transform does not apply directly to \eqref{HJell} for two reasons: $F$ is nonlinear in $D^2u$ and $H$ is not purely quadratic. Remarkably, when $F$ is linear we find a new Liouville property for second order Hamilton-Jacobi equations under a weaker notion of solution than the earlier results, cf. \cite{PeletierSerrin,Lions85}. When $H$ has general superlinear power-growth, nonexistence results can be derived from Bernstein-type estimates obtained by the doubling of variables technique \cite{CDLP,BDL,FuentesQuaas}, though they are restricted either to special fully nonlinear second-order operators 1-homogeneous in $D^2u$ or to linear nondivergence diffusions. In the special case of the quadratic gradient growth our approach provides a Liouville result without any other assumption on $F$ in $D^2u$ other than the uniform ellipticity.\\
Then, under the assumptions guaranteeing Calder\'on-Zygmund estimates for the controlled diffusive equation $G(x,D^2v)=g(x)$, see \cite{C91,CC,Escauriaza,CCKS}, we prove the following interior maximal regularity estimate for strong solutions to \eqref{HJell} that reads as
\[
\|D^2u\|_{L^q(B_\frac12)}+\||Du|^2\|_{L^q(B_\frac12)}\leq C.
\]
Hessian estimates in $L^q$ for $v$ solving $G(x,D^2v)=g(x)$ can be proved in many interesting cases that require either concavity type assumptions on $G$ in the Hessian variable, cf. e.g. \cite{Gjmpa} and the references therein, or $G$ to be an Isaacs operator with a special structure, see \cite{CCjmpa, Kovats} and Remark \ref{Isaacs}. This step is achieved by a perturbation argument via the Miranda-Nirenberg interpolation inequality \cite{Miranda,Nirenbergint}. Note that when $\Lambda/\lambda\to1$, $F(x,D^2u)=\Delta u$ and $q_E=n/2$, cf. \cite[p. 420]{Escauriaza} and the interior maximal regularity estimates are new. In particular, in the case of linear diffusions, we shed new light on a conjecture raised by P.-L. Lions about the regularity of viscous Hamilton-Jacobi equations with unbounded terms and diffusions in nondivergence form, see Section \ref{sec;conj} for a thorough discussion. We mention that the recent paper \cite{Teixeira} addresses high regularity properties in H\"older spaces for equations similar to \eqref{HJell}.\\
The methods of the paper apply also to the parabolic equation
\begin{equation}\label{HJpar}
F(x,t,D^2u)-\partial_t u+H(x,t,u,Du)=0\text{ in }Q_1:=B_1\times[-1,0),
\end{equation}
with $H$ satisfying
\begin{equation}\label{assHpar}
|H(x,t,u,Du)|\leq C_{H}|Du|^2+f(x,t),
\end{equation}
where $f\in L^q_{x,t}$, $q>q_E\in[\frac{n+2}{2},n+1)$, and lead to the following maximal regularity estimate
\[
\|\partial_t u\|_{L^q(Q_\frac12)}+\|D^2u\|_{L^q(Q_\frac12)}+\||Du|^2\|_{L^q(Q_\frac12)}\leq C,
\]
which is valid above the (parabolic) Escauriaza exponent. It is enough to exploit suitable weak Harnack inequalities and generalized maximum principles on parabolic cylinders. \\
Remarkably, the results apply in particular to nondivergence structure equations of the form
\[
\mathrm{Tr}(A(x)D^2u)+H(x,u,Du)=f(x)
\]
and to its time-dependent counterpart, where $f\in L^q$ and $H$ has natural gradient growth.They provide a $W^{2,q}_{\mathrm{loc}}$ estimate assuming that for any $q>q_E$ there exists $\eps>0$ depending on $n,\lambda,\Lambda,q$ such that
\[
\frac{1}{|B_r(x_0)|}\left(\int_{B_r(x_0)}|a_{ij}(x)-a_{ij}(x_0)|^n\,dx\right)^\frac1n\leq \eps,\ \text{ for any }B_r(x_0)\subset B_1.
\]
This requirement weakens the classical continuity estimate on $A$ leading to interior maximal $L^q$-regularity estimates, as it measures the distance from a constant coefficient equation in $L^n$ norm rather than $L^\infty$, cf. e.g. \cite{CC,GT}. The best result for this nondivergence class appeared recently in \cite{CV} for superquadratic problems, and it depends on the modulus of continuity of $A$ requiring also $A\in W^{1,n}$.
Some results in this direction for linear diffusions with discontinuous coefficients are already available when $q\geq n$ under VMO assumptions on $A$, see e.g. Section 2.6 in \cite{Maugeri}. \\
We recall that the study of a priori estimates for nonlinear equations with quadratic growth via interpolation methods is standard in the context of semilinear and quasilinear PDEs. It was initiated by H. Amann-M.G. Crandall \cite{AmannCrandall}, A. Bensoussan-J. Frehse \cite{BensoussanFrehse}, A. Maugeri-D. Palagachev-L. Softova \cite{Maugeri}, and also exploited in more recent works as \cite{CGpar,Gccm}.\\
The motivation of studying these quantitative issues, apart from their own interest in the regularity theory of nonlinear equations, stems from the theory of Mean Field Games introduced by J.-M. Lasry and P.-L. Lions \cite{LL}. In particular, they introduced in Section 2.7 of \cite{LL} the following general system of backward-forward equations arising from a drift-diffusion controlled dynamics (equipped with suitable boundary conditions that we do not display here)
\[
\begin{cases}
-\partial_t u-F(x,t,D^2u)+H(Du)=g(m)&\text{ in }\Omega\times(0,T),\\
\partial_t m-\sum_{i,j=1}^n\partial_{ij}(F_{ij}(x,t,D^2u)m)-\mathrm{div}(D_pH(Du)m)=0&\text{ in }\Omega\times(0,T),\\
u(x,T)=V[m],\ m(x,0)=m_0(x)&\text{ in }\Omega.
\end{cases}
\]
Here, $F=F(\cdot ,N)$ can be seen by extension in $\R^{n\times n}$ as a function of $n\times n$ variables and $F_{ij}(\cdot,N)=\frac{\partial F}{\partial n_{ij}}(\cdot,N)$.
It is well-known that maximal regularity for the first equation unlocks classical regularity for the whole system \cite{CGpar,GP} as a consequence of the couplings among the equations and the peculiar backward-forward structure. 

\section{Preliminaries: notions of solutions, local maximum principle and weak Harnack inequality}\label{sec;prel}
We recall some standard notions of solutions taken from \cite{CCKS}. These are needed since $F$ (the operator) and $f$ (the source) could not be continuous at points in their domains. We say that $u\in C(\Omega)$ is a $L^p$ viscosity subsolution (resp. supersolution) to $F(x,u,Du,D^2u)=f$ in $\Omega$ with $F$ proper and measurable, $p>n/2$, $f\in L^p(\Omega)$, if for all $\varphi\in W^{2,p}_{\mathrm{loc}}(\Omega)$, and point $\bar x\in\Omega$ at which $u-\varphi$ attains a local maximum (minimum), one has
\[
\mathrm{ess\ lim\ inf}_{x\to\bar x}(F(x,u(x),D\varphi(x),D^2\varphi(x))-f(x))\leq 0
\]
\[
(\mathrm{ess\ lim\ sup}_{x\to\bar x}(F(x,u(x),D\varphi(x),D^2\varphi(x))-f(x))\geq 0).
\]
Then, $u$ is a $L^p$ viscosity solution if it is both a viscosity sub- and supersolution. Clearly, if $\varphi\in C^2$ the definition reduces to the classical notion of viscosity solution.\\
Finally, we will use the notion of strong $W^{2,p}_{\mathrm{loc}}(\Omega)$ solution to $F=f$ in the usual sense, i.e. when the equation holds a.e. in $\Omega$. \\

We recall now the following ``half-Harnack'' inequalities for semisolutions of fully nonlinear equations. From now on, when considering $L^p$ viscosity solutions, we assume $p>q_E$. The results are classical and are the matter of \cite{C91,CC}, while we present here some refinements due to \cite[Theorem A]{SirakovSouplet}, \cite{KoikeSwiechJapan} and \cite{AmendolaRossiVitolo}. We denote by $q_E\in[n/2,n)$ the Escauriaza exponent \cite{Escauriaza}. In what follows $\mathcal{M}^\pm_{\lambda,\Lambda}$ denote the Pucci's extremal operators.
\begin{thm}
Suppose $g,h\in L^q(\Omega)$, $\Omega\subseteq\R^n$, $q>q_E$. We have the following
\begin{itemize}
\item (weak Harnack inequality) There exist constants $\eps$ and $C_W$ depending only on $q,n,\lambda,\Lambda$ such that if $v\geq0$ is a $L^p$ viscosity solution of $\mathcal{M}^-_{\lambda,\Lambda}(D^2v)-gv\leq g$ in $\Omega$, then 
\[
\left(\frac{1}{R^n}\int_{B_R}v^\eps\,dx\right)^\frac1\eps\leq C_W\left(\inf_{B_{\frac{R}{2}}}v+R^{2-\frac{n}{q}}\|g\|_{L^q(B_{R/2})}\right),
\]
where $C_W$ depends only on $n,q,\lambda,\Lambda$.
\item (Local maximum principle) Suppose $\mathcal{M}^+_{\lambda,\Lambda}(D^2v)\geq h+hv$ in $\Omega$ in the $L^p$-viscosity sense. Then, for any $s>0$ there exists a constant $C_M>0$ depending on $n,q,\lambda,\Lambda,s$ such that
\[
\sup_{B_{\frac{R}{2}}}v\leq C_M\left(\frac{1}{R^\frac{n}{s}}\|v^+\|_{L^s(B_R)}+R^{2-\frac{n}{q}}\|h\|_{L^q(B_R)}\right).
\]
\end{itemize}
\end{thm}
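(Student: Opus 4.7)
The plan is to deploy the Escauriaza--Caffarelli--Cabr\'e--CCKS machinery for $L^p$ viscosity solutions in the range $p>q_E$, whose cornerstone is the $L^q$ Alexandrov--Bakelman--Pucci (ABP) estimate of \cite{Escauriaza,CCKS}: any $L^p$ viscosity supersolution $w$ of $\mathcal{M}^-(D^2w)\leq h$ on a bounded cube satisfies an $L^\infty$-bound on $w^-$ in terms of boundary data and $\|h\|_{L^q}$, precisely because $q>q_E$. I would prove both statements first in the simpler case without the zeroth-order term $gv$, and then absorb that term by a careful normalization of $v$.

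For the weak Harnack inequality I would follow the Krylov--Safonov scheme in the fully nonlinear form of \cite{C91,CC}. Applying ABP to $v$ on small dyadic cubes and iterating via a Calder\'on--Zygmund-type stopping-time decomposition yields the crucial \emph{decay-of-distribution} estimate: if $\inf_{B_{1/4}}v\leq 1$ and $\|g\|_{L^q}$ is small, then $|\{v>t\}\cap B_{1/2}|\leq C\,t^{-\eps}$ for some universal $\eps>0$ and all $t\geq 1$. Integrating this distributional bound against $t^{\eps-1}\,dt$ produces the $L^\eps$ estimate; the canonical rescaling $x\mapsto Rx$ then restores the factor $R^{2-n/q}$ in front of $\|g\|_{L^q}$. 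The local maximum principle is obtained dually: for $L^p$ viscosity subsolutions of $\mathcal{M}^+(D^2v)\geq g+gv$, one iterates ABP on a shrinking sequence of concentric balls $B_{r_k}$ and interpolates between $L^s$ and $L^\infty$ via Young's inequality, which converts any $L^s$-bound ($s>0$) into a pointwise sup bound with the same parameter dependencies.

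The principal obstacle is absorbing the zeroth-order term $gv$, which is unbounded and has merely $L^q$-integrability in both statements. I would handle this by a Sirakov--Souplet-style renormalization: set $\tilde v := v/K$ with $K := \inf_{B_{R/2}}v + R^{2-n/q}\|g\|_{L^q(B_R)}$ for the weak Harnack part (and an analogous $L^\infty$-based normalization for the sup estimate), so that the effective right-hand side becomes $g/K + g\tilde v$. The added factor $g\tilde v$ is reabsorbed by subdividing $B_R$ through a finite covering at a scale on which $\|g\|_{L^q}$ falls below a universal threshold, which is possible by absolute continuity of the $L^q$-norm of $g$. This reduction to the classical smallness regime, together with a telescoping argument on the covering, is exactly the refinement carried out in \cite{SirakovSouplet,KoikeSwiechJapan,AmendolaRossiVitolo}, and it is the only genuine departure from the standard Caffarelli--Cabr\'e template; all remaining steps are cosmetic adaptations once one confirms that the $L^p$ viscosity formulation is stable under the truncations and test-function manipulations employed by ABP.
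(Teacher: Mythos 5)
The paper offers no proof of this theorem---it is imported from \cite{C91,CC,Escauriaza,SirakovSouplet,KoikeSwiechJapan,AmendolaRossiVitolo}---and your sketch correctly reconstructs the standard argument of those references: the $L^q$-ABP estimate valid for $q>q_E$, the barrier plus Calder\'on--Zygmund cube decomposition yielding the power decay of the distribution function (note one integrates $v^{\eps'}$ for $\eps'<\eps$ strictly, not against $t^{\eps-1}\,dt$), the dual iteration with the absorption lemma for the local maximum principle, and normalization plus a covering at a scale where the rescaled $\|g\|_{L^q}$ is below the universal threshold to handle the zeroth-order term. The only substantive caveat is that your covering/telescoping step inevitably makes $C_W$ and $C_M$ depend on $\|g\|_{L^q(\Omega)}$ as well, and this dependence is in fact unavoidable: for $g\equiv M$ large, $v(x)=\cosh(\sqrt{M/\lambda}\,x_1)-1$ satisfies $\mathcal{M}^-(D^2v)-Mv\le M$ yet violates the weak Harnack inequality with a constant independent of $M$; this is a defect of the statement as printed rather than of your argument, and is harmless for the paper since in the proof of Theorem \ref{Harnack} the zeroth-order factors $1-\beta w\in(0,1]$ and $1+\beta v\le e^{\beta\|u\|_\infty}$ are bounded, so only the version without a genuine zeroth-order term is ever invoked.
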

Similar results hold for the fully nonlinear evolution equation
\begin{equation}
G(x,t,D^2v)-\partial_t v=g(x,t).
\end{equation}
We refer to \cite{Wang1,Wang2} for more details.

\section{Harnack inequality for equations with quadratic growth and applications to Liouville properties}\label{sec;holder}
We now present the following important nonlinear Harnack inequality for Hamilton-Jacobi-Bellman and -Isaacs equations of the form \eqref{HJell}. It slightly improves Theorem 6.9 in \cite{KoikeBook} and Theorem 3.5 in \cite{AmendolaRossiVitolo} with respect to the integrability requirements on the right-hand side, though the proof is similar and based on exponential transformations. This approach dates back to \cite{TrudingerRevista}, see Theorem 5.1 therein, where a bounded right-hand side was considered and the results proved for continuous viscosity solutions.
\begin{thm}\label{Harnack}
Let $u$ be a nonnegative and bounded $L^p$ viscosity solution of \eqref{HJell} satisfying \eqref{assH}. Then it satisfies
\[
\sup_{B_{\frac{1}{2}}}u\leq \overline{C}e^{2\frac{C_H}{\lambda}\|u\|_\infty}\left(\inf_{B_{\frac{1}{2}}}u+\|f\|_{L^q(B_1)}\right),
\]  
where $\overline{C}$ is a positive constant depending on $n,q,\lambda,\Lambda$. Moreover, when $f=0$ the following scaled version of the Harnack inequality holds
\[
\sup_{B_{\frac{R}{2}}}u\leq C\inf_{B_\frac{R}{2}}u,
\]
where $C$ is a positive constant depending on $n,q,\lambda,\Lambda,\|u\|_\infty,C_H$, but not on $R$.
\end{thm}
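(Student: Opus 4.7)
The plan is to apply the classical Hopf--Cole exponential device, adapted to the fully nonlinear setting as in \cite{TrudingerRevista, KoikeBook, AmendolaRossiVitolo}. Set $\alpha := C_H/\lambda$ and introduce
\[
w := 1 - e^{-\alpha u}, \qquad v := e^{\alpha u} - 1,
\]
both nonnegative on $\{u \geq 0\}$. Since $u$ is an $L^p$-viscosity solution and $\mathcal{M}^-(M) \leq F(x,M) \leq \mathcal{M}^+(M)$, equation \eqref{HJell} together with \eqref{assH} produces, in the viscosity sense,
\[
\mathcal{M}^-(D^2 u) \leq C_H|Du|^2 + f, \qquad \mathcal{M}^+(D^2 u) \geq -C_H|Du|^2 - f.
\]
A chain-rule computation, together with positive $1$-homogeneity of $\mathcal{M}^\pm$ and the Pucci subadditivity inequalities $\mathcal{M}^-(A+B) \leq \mathcal{M}^-(A) + \mathcal{M}^+(B)$ and $\mathcal{M}^+(A+B) \geq \mathcal{M}^+(A) + \mathcal{M}^-(B)$, then gives
\[
\mathcal{M}^-(D^2 w) \leq \alpha e^{-\alpha u}\bigl[(C_H - \alpha\lambda)|Du|^2 + f\bigr] = \alpha e^{-\alpha u} f \leq \alpha|f|,
\]
and analogously $\mathcal{M}^+(D^2 v) \geq -\alpha e^{\alpha u} f \geq -\alpha e^{\alpha\|u\|_\infty}|f|$: the quadratic gradient terms cancel exactly because $\alpha\lambda = C_H$, which is the whole point of the choice of $\alpha$.

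Both transformed inequalities are now of purely Pucci type with right-hand sides in $L^q$, and thus fall within the scope of the half-Harnack estimates of Section \ref{sec;prel}. Applying the weak Harnack inequality to $w$ on $B_1$ with $g = \alpha|f|$, and the local maximum principle to $v$ on $B_1$ with exponent $s = \eps$ and $g = \alpha e^{\alpha\|u\|_\infty}|f|$, produces
\[
\|w\|_{L^\eps(B_1)} \leq C_W\bigl(\inf_{B_{1/2}} w + \|f\|_{L^q(B_1)}\bigr),
\]
\[
\sup_{B_{1/2}} v \leq C_M\bigl(\|v\|_{L^\eps(B_1)} + e^{\alpha\|u\|_\infty}\|f\|_{L^q(B_1)}\bigr).
\]
The pointwise identity $v = e^{\alpha u}\, w$ yields $\|v\|_{L^\eps(B_1)} \leq e^{\alpha\|u\|_\infty}\|w\|_{L^\eps(B_1)}$; chaining the two displays then gives
\[
\sup_{B_{1/2}} v \leq \overline{C}\, e^{c\alpha\|u\|_\infty}\bigl(\inf_{B_{1/2}} w + \|f\|_{L^q(B_1)}\bigr)
\]
for a universal $c$, and converting back to $u$ via $\sup v \geq \alpha \sup u$ (from $e^x \geq 1+x$) together with $\inf w \leq \alpha \inf u$ (from $1 - e^{-x} \leq x$) delivers the announced Harnack inequality on $B_1$.

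For the scaled version when $f\equiv 0$, rescale $\tilde u(y) := u(Ry)$ on $B_1$: the relations $D^2_y\tilde u = R^2 D^2_x u$ and $D_y\tilde u = R D_x u$ transform the Pucci inequalities above into $\mathcal{M}^-(D^2\tilde u) \leq C_H|D\tilde u|^2$ and $\mathcal{M}^+(D^2\tilde u) \geq -C_H|D\tilde u|^2$ with the \emph{same} constant $C_H$, while $\|\tilde u\|_{L^\infty(B_1)} = \|u\|_{L^\infty(B_R)}$. Since the first part of the proof uses only these Pucci inequalities and not the specific form of $F$, it applies verbatim to $\tilde u$ and yields an $R$-independent bound. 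The main technical point, and the only delicate step, is the viscosity-level justification of the exponential chain rule: the test functions in the $L^p$-viscosity formulation lie only in $W^{2,p}_{\mathrm{loc}}$, so one must check that compositions with $r \mapsto e^{\pm\alpha r}$ remain in $W^{2,p}_{\mathrm{loc}}$ and that the Pucci inequalities derived above for $w$ and $v$ hold as genuine $L^p$-viscosity inequalities. This is by now a standard procedure (cf.\ \cite{KoikeSwiechJapan, KoikeBook}), but it is the actual content of turning the formal calculation into a rigorous argument.
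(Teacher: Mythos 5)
Your proposal is correct and follows essentially the same route as the paper: the exponential substitutions $v=e^{\alpha u}-1$, $w=1-e^{-\alpha u}$ with $\alpha=C_H/\lambda$, the Pucci subadditivity to cancel the quadratic term, the weak Harnack inequality for $w$ and the local maximum principle for $v$, and the comparison $v=e^{\alpha u}w$ (the paper normalizes by $1/\alpha$, i.e.\ uses $\frac{1}{\beta}(e^{\beta u}-1)$ and $\frac{1}{\beta}(1-e^{-\beta u})$, which you should also do so that $u\leq v$ and $w\leq u$ hold and the $\|f\|_{L^q}$ term does not pick up a spurious $\alpha^{-1}$ factor). The viscosity-level justification of the chain rule, which you correctly flag as the delicate point, is handled in the paper by citing Lemma 2.3 of \cite{Sirakov}.
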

\begin{proof}
The proof follows the argument of Theorem 6.9 in \cite{KoikeBook}, to which we refer for a detailed proof in the case $q\geq n$. Since the steps are the same, we provide only the main ideas. It is enough to note that $v=\frac{1}{\beta}(e^{\beta u}-1)$ for $\beta=C_H/\lambda$ is a $L^p$ viscosity subsolution to an equation with a zero-th order coefficient and a right-hand side in $L^q$ spaces driven by $\mathcal{M}^+_{\lambda,\Lambda}$ in view of Lemma 2.3 in \cite{Sirakov}, see also \cite{SwiechDCDS} or \cite{KoikeBook}. Indeed, if $u$ solves \eqref{HJell} in the $L^p$ viscosity sense, we have
\[
\mathcal{M}^+_{\lambda,\Lambda}(D^2u)+C_H|Du|^2\geq-f(x)
\]
in the $L^p$ viscosity sense. Then, $v=\frac{1}{\beta}(e^{\beta u}-1)$ for $\beta=C_H/\lambda$ solves
\[
\frac{\mathcal{M}^+_{\lambda,\Lambda}(D^2v)}{1+\beta v}\geq \mathcal{M}^+_{\lambda,\Lambda}(D^2u)+C_H|Du|^2.
\]
This implies in turn
\[
\mathcal{M}^+_{\lambda,\Lambda}(D^2v)\geq -(1+\beta v)f(x).
\]
 We can apply the local maximum principle to conclude for all $s>0$
\begin{equation}\label{LMP}
\sup_{B_{\frac{1}{2}}}u\leq \sup_{B_{\frac{1}{2}}}v\leq C_M\left(\|v\|_{L^s(B_1)}+\|f\|_{L^q(B_1)}\right).
\end{equation}
On the other hand, we have that $w=\frac{1}{\beta}(1-e^{-\beta u})$ is a $L^p$ viscosity supersolution to an equation driven by $\mathcal{M}^-_{\lambda,\Lambda}$ again by Lemma 2.3 in \cite{Sirakov}. Indeed,
we have that 
\[
\mathcal{M}^-_{\lambda,\Lambda}(D^2u)-C_H|Du|^2\leq f(x).
\]
This implies that
\[
\frac{\mathcal{M}^-_{\lambda,\Lambda}(D^2w)}{1-\beta w}\leq f(x). 
\]
Then the weak Harnack inequality gives
\begin{equation}\label{weakHarnack}
\|w\|_{L^{\eps}(B_1)}\leq C_W\left(\inf_{B_{\frac{1}{2}}}w+\|f\|_{L^q(B_\frac12)}\right).
\end{equation}
We note now that $u\leq v\leq ue^{\beta \|u\|_\infty}$ and $ue^{-\beta\|u\|_\infty}\leq w\leq  u$. Therefore, combining these inequalities with \eqref{LMP} (applied with $s=\eps$) and \eqref{weakHarnack} we conclude for a suitable positive constant $\overline{C}$ the validity of the following chain of inequalities
\begin{align*}
\sup_{B_\frac12}u\leq \sup_{B_\frac12}v&\leq C_M\left(\|v\|_{L^s(B_1)}+\|f\|_{L^q(B_1)}\right)\leq C_M\left(e^{2\beta\|u\|_\infty}\|w\|_{L^s(B_1)}+\|f\|_{L^q(B_1)}\right)\\
&\leq C_M\left[C_W e^{2\beta\|u\|_\infty}\left(\inf_{B_{\frac{1}{2}}}w+\|f\|_{L^q(B_\frac12)}\right)+\|f\|_{L^q(B_1)}\right]\\
&\leq \overline{C}e^{2\frac{C_H}{\lambda}\|u\|_\infty}\left(\inf_{B_{\frac{1}{2}}}u+\|f\|_{L^q(B_1)}\right).
\end{align*}
\end{proof}
Classical methods for linear equations lead to the local H\"older continuity of viscosity solutions. H\"older bounds for fully nonlinear equations with quadratic growth conditions in the gradient appeared in \cite{Sirakov}, which do not exploit the Harnack inequality and are more flexible if one only needs a H\"older estimate. Some other developments can be found in \cite{KoikeSwiechJapan} for equations with linear gradient growth. The novelty of our H\"older estimate, whose proof is classical since it follows from the Harnack inequality (cf. Lemma 2 in \cite{Escauriaza} or \cite{KoikeBook}), is the requirement $q_E<q<n$ (in \cite{Sirakov} or \cite{KoikeBook}, instead, the proof requires $q=n$) and the treatment of equations with quadratic growth (in \cite{KoikeSwiechJapan} equations have at most linear gradient growth). More recent H\"older estimates for equations with subquadratic gradient growth are the matter of \cite{VitorNornberg}. Finally, the paper \cite{CDLP} contains a H\"older regularity estimate for subsolutions of fully nonlinear PDEs and superquadratic gradient terms (but with bounded right-hand side). The latter results are of different nature, as the regularity comes from the coercivity of the gradient term and not from the diffusion.
\begin{cor}\label{Holder}
Under the standing assumptions, we have that $L^p$ viscosity solutions to \eqref{HJell} are  $C^\alpha_{\mathrm{loc}}$ with $\alpha=\alpha(n,q,\lambda,\Lambda,C_H,\|u\|_\infty,\|f\|_{L^q})\in(0,1)$.
\end{cor}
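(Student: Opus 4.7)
The plan is to deduce local H\"older regularity from the invariant Harnack inequality of Theorem~\ref{Harnack} via the classical oscillation-decay (Krylov--Safonov/De~Giorgi--Moser) argument. For a ball $B_R\subset\Omega$ with $R\leq 1$, I would set $M_R:=\sup_{B_R}u$, $m_R:=\inf_{B_R}u$, and introduce the two nonnegative functions
\[
v_1:=M_R-u,\qquad v_2:=u-m_R\quad\text{on }B_R.
\]
My first step would be to check that both $v_i$ satisfy in the $L^p$-viscosity sense the same pair of Pucci inequalities
\[
\mathcal{M}^-_{\lambda,\Lambda}(D^2v_i)-C_H|Dv_i|^2\leq f,\qquad \mathcal{M}^+_{\lambda,\Lambda}(D^2v_i)+C_H|Dv_i|^2\geq-f,
\]
that \eqref{HJell}--\eqref{assH} forces on $u$ itself and which are already exploited in the proof of Theorem~\ref{Harnack}. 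Because these inequalities are invariant under the reflection $u\mapsto-u$, thanks to the identities $\mathcal{M}^\pm(-M)=-\mathcal{M}^\mp(M)$ and $|D(M_R-u)|^2=|Du|^2$, this amounts to a short consistency check on the notion of $L^p$-viscosity sub/supersolution of \cite{CCKS}.

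Next, I would apply Theorem~\ref{Harnack} (whose proof only invokes the two Pucci-type inequalities above) to the dilations $\tilde v_i(x):=v_i(Rx)$, which preserve $C_H$ and the $L^\infty$-bound $\|v_i\|_\infty\leq 2\|u\|_\infty$, and whose source term satisfies $\|\tilde f\|_{L^q(B_1)}=R^{2-n/q}\|f\|_{L^q(B_R)}$. This will yield, for $i=1,2$,
\[
\sup_{B_{R/2}}v_i\leq K\Bigl(\inf_{B_{R/2}}v_i+R^{2-n/q}\|f\|_{L^q(B_R)}\Bigr),
\]
with $K=K(n,q,\lambda,\Lambda,C_H,\|u\|_\infty)>1$ independent of $R$. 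Writing $\sup_{B_{R/2}}v_1=M_R-m_{R/2}$, $\inf_{B_{R/2}}v_1=M_R-M_{R/2}$ and symmetrically for $v_2$, and adding the two Harnack bounds, I arrive at the oscillation contraction
\[
\osc_{B_{R/2}}u\leq \gamma\,\osc_{B_R}u+C\,R^{2-n/q}\|f\|_{L^q(B_R)},\qquad \gamma:=\tfrac{K-1}{K+1}\in(0,1).
\]

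Finally, since $q>q_E\geq n/2$ gives $\delta:=2-n/q>0$, I would close the argument by a standard dyadic iteration on the sequence $\{B_{2^{-k}R_0}\}$ (cf.\ Lemma~8.23 of \cite{HanLin} or Lemma~2 of \cite{Escauriaza}) to conclude $\osc_{B_r}u\leq C(r/R_0)^\alpha\bigl(\osc_{B_{R_0}}u+R_0^\delta\|f\|_{L^q(B_{R_0})}\bigr)$ with $\alpha=\min\{\log_2(1/\gamma),\delta\}\in(0,1)$, which is the asserted local $C^\alpha$ estimate. The main obstacle I foresee is not the iteration itself, which is routine, but rather verifying that the Harnack constant $K$ can legitimately be inherited by the functions $v_i$ at \emph{every} dyadic scale: this requires both that $\|v_i\|_\infty$ stays bounded by $2\|u\|_\infty$ independently of the ball (it does, by construction) and that the exponential factor $e^{2C_H\|u\|_\infty/\lambda}$ in Theorem~\ref{Harnack} is absorbed once and for all into $K$. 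Modulo this bookkeeping and the $L^p$-viscosity consistency check above, the remaining steps are standard.
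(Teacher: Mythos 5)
Your argument is correct and is essentially the paper's own proof: the paper simply invokes the standard oscillation-decay scheme (Corollary 4.18 in \cite{HanLin}, Theorem 6.10 in \cite{KoikeBook}), which is exactly what you have written out — applying the invariant Harnack inequality of Theorem \ref{Harnack} to $M_R-u$ and $u-m_R$ (both of which inherit the two Pucci inequalities, with $\|v_i\|_\infty\leq 2\|u\|_\infty$ controlling the exponential factor uniformly in the scale), summing to get $\osc_{B_{R/2}}u\leq\gamma\osc_{B_R}u+CR^{2-n/q}\|f\|_{L^q(B_R)}$, and iterating dyadically. The only cosmetic point is that the iteration lemma yields any exponent \emph{strictly} below $\min\{\log_2(1/\gamma),\,2-n/q\}$ rather than the minimum itself, which does not affect the statement.
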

\begin{proof}
The proof is standard and follows the argument of e.g. Corollary 4.18 in \cite{HanLin} or Theorem 6.10 in \cite{KoikeBook}.
\end{proof}
\begin{rem}
By an argument of L. Caffarelli \cite{C91} one can raise the regularity up to $C^{1,\alpha}$. Such regularity results under natural or subquadratic growth conditions can be found in \cite{Nornberg,SwiechDCDS,VitorNornberg}. These are the counterpart of Theorem 4.24 in \cite{HanLin} for equations in divergence form.
\end{rem}

\begin{rem}[Comparison with divergence structure equations]\label{div}
The previous argument to derive local H\"older estimates has been already used (and actually inspired from) a variational approach in the context of divergence-type equations
\[
\mathrm{div}(A(x)Du)=H(x,u,Du)
\]
in \cite{BensoussanFrehse} and \cite{HanLin} when $H$ satisfies \eqref{assH}, $f\in L^q$, $q>n/2$, and $A$ measurable and such that $A\in L^\infty$ satisfies $\lambda I_n\leq A\leq \Lambda I_n$. In this case, it is enough to observe that $v=\frac{1}{\alpha}(e^{\alpha u}-1)$ for a suitable large $\alpha$ is a weak $H^1$ subsolution to $\mathrm{div}(A(x)Dv)=g(x)$, $g\in L^q$, so that one can apply the local boundedness theorem of J. Moser, cf. Theorem 4.14 in \cite{HanLin}. Conversely, $w=\frac{1}{\alpha}(1-e^{-\alpha u})$ is a supersolution to a similar equation for large $\alpha$ and hence one can apply the weak Harnack inequality in Theorem 4.15 of \cite{HanLin} (here the bound on $\eps$ is explicit and $\eps<\frac{n}{n-2}$). The conclusion follows as in Theorem \ref{Harnack}. Higher regularity in $C^{1,\alpha}$ can be achieved under the additional assumptions that $A\in C^\alpha$, see e.g. \cite{HanLin}. \\
The procedure detailed in this section can be regarded as the counterpart of the results in \cite{HanLin} for nondivergence structure equations and the results appear to be new. Nonetheless, it has been partially used to derive weak Harnack inequalities or local maximum principles for equations with subquadratic growth in e.g. \cite{SwiechDCDS,AmendolaRossiVitolo}. As in the divergence setting, higher regularity estimates follow by assuming additional conditions on $F$, as discussed in the next section.
\end{rem}
Since the nonlinear Harnack inequality is invariant with respect to the radius of the ball $B_R$ we can derive a nonlinear Liouville property in the same way one obtains the classical Liouville theorem for bounded or semi-bounded harmonic functions. A review on Liouville properties for such equations can be found in \cite{CGproc}.
\begin{cor}\label{liouville}
If $u$ is a nonnegative and bounded $L^p$, $p>q_E$, viscosity solution to
\[
F(x,D^2u)+H(Du)=0\text{ in }\R^n,
\]
where $F$ is any uniformly elliptic operator and $H$ satisfies the natural growth conditions \eqref{assH} with $f=0$, then it must be a constant.
\end{cor}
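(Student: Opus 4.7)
The plan is to exploit the second, scale-invariant form of the nonlinear Harnack inequality from Theorem \ref{Harnack}, whose crucial feature is that the constant $C$ depends on $\|u\|_\infty$ and $C_H$ but is independent of the radius $R$. This is exactly the uniformity one needs to run the standard ``blow-down'' argument that derives Liouville theorems from invariant Harnack inequalities, in perfect analogy with harmonic functions.

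First, I would reduce to the case $\inf_{\R^n} u = 0$. Set $m := \inf_{\R^n} u \geq 0$ and $v := u - m$. Since the operator $F(x, D^2\cdot)$ and the Hamiltonian $H(D\cdot)$ depend only on derivatives of $u$, the function $v$ is still an $L^p$ viscosity solution of $F(x,D^2v) + H(Dv) = 0$ in $\R^n$ (the test functions in the definition of $L^p$ viscosity solution can be shifted by any constant without affecting the inequalities). Moreover $v \geq 0$, $\|v\|_\infty \leq \|u\|_\infty$, and $\inf_{\R^n} v = 0$. The assumption \eqref{assH} is satisfied with $f = 0$, so Theorem \ref{Harnack} applies.

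Next, for any $R > 0$, apply the scaled Harnack inequality to $v$ on $B_R$:
\[
\sup_{B_{R/2}} v \leq C \inf_{B_{R/2}} v,
\]
where $C = C(n,q,\lambda,\Lambda,\|v\|_\infty, C_H)$ is independent of $R$. Letting $R \to \infty$, the right-hand side converges to $C \inf_{\R^n} v = 0$, while the left-hand side converges to $\sup_{\R^n} v$. Hence $\sup_{\R^n} v = 0$, and since $v \geq 0$ we get $v \equiv 0$, i.e.\ $u \equiv m$ is constant.

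There is no real obstacle here beyond the one already overcome in Theorem \ref{Harnack}: the entire argument hinges on the independence of $C$ from $R$, which in turn reflects the fact that the exponential transformations $v = \beta^{-1}(e^{\beta u}-1)$ and $w = \beta^{-1}(1-e^{-\beta u})$ used to produce the half-Harnack inequalities absorb the quadratic gradient term into a zero-order term whose norm is controlled only by $\|u\|_\infty$. If one only assumed $u$ bounded above (resp.\ below), the same argument applied to $\sup u - u$ (resp.\ $u - \inf u$) would still conclude, so the statement admits an obvious one-sided generalization.
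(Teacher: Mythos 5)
Your proof of the corollary itself is correct and is exactly the argument the paper intends: translate $u$ so that its infimum is zero (which leaves the equation invariant since $F$ and $H$ depend only on $D^2u$ and $Du$), apply the $R$-independent Harnack inequality $\sup_{B_{R/2}}v\leq C\inf_{B_{R/2}}v$ from Theorem \ref{Harnack} with $f=0$, and let $R\to\infty$.

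One caveat on your closing aside: the claimed ``obvious one-sided generalization'' does not follow from this argument. The constant in the nonlinear Harnack inequality is $\overline{C}e^{2C_H\|u\|_\infty/\lambda}$, so it genuinely requires a two-sided bound; if $u$ is only bounded above, then $v=\sup u-u$ is nonnegative but possibly unbounded and the Harnack constant degenerates. The paper's remark following the corollary makes precisely this point: a one-sided bound suffices only in the special case $F=\Delta$, $H(Du)=|Du|^2$ via the exact Hopf--Cole transform, which ``does not apply directly to problems with natural gradient growth.'' Since this aside is not used in the proof of the stated corollary, the proof stands, but the extra claim should be removed or substantially qualified.
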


\begin{rem}
The above result for equations with power-like gradient terms is usually deduced through (viscosity) Bernstein type gradient estimates: in the fully nonlinear case this is known for nondivergence operators, 1-homogeneous fully nonlinear operators or Pucci's extremal operators only, see e.g. respectively \cite{CDLP,BDL,FuentesQuaas}. We note that the Liouville property for rather general fully nonlinear diffusions can be deduced through the parabolic estimate in Theorem 4.19 of \cite{PorrettaPriola}, but it holds for continuous bounded solutions and subquadratic gradient terms. The case of linear diffusions in nondivergence form with Lipschitz coefficients was proved in \cite{PeletierSerrin}, see also \cite{Lions85}, via the more classical Bernstein method. In the case of diffusions driven by $\Delta u$ and a purely quadratic Hamiltonian $H(Du)=|Du|^2$, the Hopf-Cole transformation allows to obtain the Liouville property from the linear Harnack inequality assuming only a one-side bound on the solution. Such a transformation does not apply directly to problems with natural gradient growth. Moreover, this latter approach cannot be directly applied to fully nonlinear equations since the exponential change of variable leads to an inequality rather than an equality.  
\end{rem}
\section{Maximal $L^q$-regularity by interpolation and a (fully nonlinear) conjecture of P.-L. Lions}\label{sec;conj}
When Calder\'on-Zygmund estimates for the equation without lower-order terms
\[
G(x,D^2u)=g(x)
\]
are available, one can prove maximal regularity estimates in $L^q$ spaces for \eqref{HJell} by interpolation with the $C^\alpha$ bounds. The former estimates are true in many interesting cases, in particular when the constant coefficient equation has $C^{1,1}$ estimates: this is the case of concave/convex operators (see e.g. \cite{C91,Escauriaza}), but also when the level sets of the operator are convex, cf. \cite{CaffarelliYuan,Gjmpa}, or when $F$ is concave or convex at infinity \cite{CaffarelliYuan,HuangAIHP,Gjmpa}. The precise statement of the Calder\'on-Zygmund estimate is due to L. Caffarelli \cite{C91} and L. Escauriaza \cite{Escauriaza}, see also Appendix B of \cite{CCKS} and \cite{KS}.\\
Calder\'on-Zygmund estimates for \eqref{HJell} follow by interpolation via the Miranda-Nirenberg inequalities \cite{BensoussanFrehse,Miranda,Nirenbergint}, as it reduces the problem to a lower order estimate in $C^\alpha_{\mathrm{loc}}$. In particular, we emphasize that the results apply to the equation
\[
\mathrm{Tr}(A(x)D^2u)=H(x,u,Du),\  A\in C^0.
\]
We remark that all the known maximal regularity results for such diffusion operators require some differentiability properties of the diffusion coefficient (see e.g. \cite{BardiPerthame,CV,CGpar}).
We start by recalling a maximal $L^q$-regularity estimate of Calder\'on-Zygmund type for a fully nonlinear equation without lower-order terms: it needs the validity of $C^{1,1}$ estimate for the ``constant coefficient'' equation (i.e. for $F(x_0,D^2u)$ with $x_0$ fixed) along with a control on the oscillation of $F$ in $x$ with respect to the $L^n$ norm. The result below can be found in Theorem 1 of \cite{C91} and was refined in Theorem 1 of \cite{Escauriaza}.

\begin{lemma}\label{CZg}

Let $u\in W^{2,q}_{\mathrm{loc}}(\Omega)\cap L^q(\Omega)$ be a strong solution to $G(x,D^2u)=g(x)$, $g\in L^q(\Omega)$, $q>q_E$, $B_1\subset\Omega$, and $G$ uniformly elliptic with $G(x,0)=0$ and measurable in $x$. Suppose that the constant coefficient equation has $C^{1,1}$ interior estimates, namely, we suppose that $G(x_0,D^2 w)$ has interior $C^{1,1}$ estimates (with constant $c_e$) for any $x_0\in B_1$. We suppose also that, defined 
\[
\beta(x,x_0)=\sup_{M\in\mathcal{S}_n}\frac{|G(x,M)-G(x_0,M)|}{\|M\|+1},
\]
for any ball $B_R(x_0)\subset B_1$ it holds
\[
\left(|B_r(x_0)|^{-1}\int_{B_r(x_0)}\beta^n(x,x_0)\,dx\right)^\frac1n\leq\beta_0.
\]
Then for all $B_R\subset\subset\Omega$, $R<1$, and $\sigma\in(0,1)$ we have
\begin{equation}\label{ineqGT}
\|D^2u\|_{L^q(B_{\sigma R})}\leq \frac{C(n,q,\lambda,\Lambda,c_e,\beta_0)}{(1-\sigma)^2R^2}(R^2\|g\|_{L^q(B_R)}+\|u\|_{L^q(B_R)})\ ,\sigma\in(0,1).
\end{equation}
\end{lemma}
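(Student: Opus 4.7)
My strategy is the Caffarelli--Escauriaza perturbation scheme: freeze the operator in its spatial variable, use the hypothesized interior $C^{1,1}$ estimate for the frozen problem to approximate $u$ by a function with a bounded Hessian, and then convert the approximation into an $L^q$ Hessian bound via a power-decay estimate for the distribution function of $|D^2u|$. The ingredient that permits the whole range $q>q_E$ below the dimensional exponent is Escauriaza's sharpening of the Alexandrov--Bakelman--Pucci estimate, which remains valid with $L^q$ right-hand sides for every $q>q_E$.

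After the rescaling $u(x)\mapsto R^{-2}u(Rx)$, which preserves the form of the equation and all the structural constants (including $\beta_0$ and $c_e$), it is enough to prove the estimate on $B_{1/2}\subset B_1$; the general dependence on $R$ and $\sigma$ is then recovered by a standard covering and an iteration over nested balls. The core approximation step goes as follows: fix $x_0\in B_{1/2}$ and a sufficiently small ball $B_r(x_0)\subset B_1$, and let $w$ solve $G(x_0,D^2w)=0$ in $B_r(x_0)$ with $w=u$ on $\partial B_r(x_0)$. The hypothesis supplies $\|D^2w\|_{L^\infty(B_{r/2}(x_0))}\leq c_e r^{-2}\osc_{B_r(x_0)}u$, and the difference $v=u-w$ satisfies, by uniform ellipticity and the definition of $\beta$, the Pucci inequalities
\[
\mathcal{M}^-_{\lambda,\Lambda}(D^2v)\leq |g|+\beta(\cdot,x_0)(|D^2w|+1),\qquad \mathcal{M}^+_{\lambda,\Lambda}(D^2v)\geq -|g|-\beta(\cdot,x_0)(|D^2w|+1).
\]
Escauriaza's ABP bound then controls $\|v\|_{L^\infty(B_{r/2}(x_0))}$ by a multiple of $r^{2-n/q}\|g\|_{L^q(B_r(x_0))}+\beta_0(1+c_e r^{-2}\osc u)$, so that for $\beta_0$ small enough $u$ is uniformly close to a $C^{1,1}$ function at every scale.

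Next I would implement Caffarelli's dyadic scheme, measuring $|D^2u|$ via the paraboloid contact set $G_M(u)$ of points where $u$ is touched from above and below by paraboloids of opening $M$, and applying a Calder\'on--Zygmund cube decomposition to $A_M=B_{1/2}\setminus G_M(u)$. The approximation lemma, combined with the frozen $C^{1,1}$ bound, promotes $u$ to being touched by paraboloids of opening $MN$ on a set of density at least $1-\rho$ inside each dyadic cube meeting $A_M$, for a large universal $N$ and some $\rho<1$. Iterating yields
\[
|A_{MN^k}|\leq \rho^k \bigl(|A_M|+C\|g\|_{L^q(B_1)}^q\bigr),
\]
which inserted into the layer-cake identity and summed as a geometric series gives $\|D^2u\|_{L^q(B_{1/2})}\leq C(\|u\|_{L^q(B_1)}+\|g\|_{L^q(B_1)})$; undoing the scaling produces the form stated in \eqref{ineqGT}.

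The main obstacle is calibrating constants so that the geometric series converges for every $q>q_E$, i.e.\ so that $N^q\rho<1$. The opening factor $N$ is forced to grow as $\beta_0\to 0$, and it is precisely Escauriaza's improvement of the ABP estimate below the dimensional exponent that makes $\rho$ decay fast enough in $\beta_0$ to absorb $N^q$ for every $q>q_E$. Below the threshold $q_E$ this compensation breaks down and the iteration diverges, which is why the natural range for this Calder\'on--Zygmund estimate coincides with the range of validity of the maximum principle.
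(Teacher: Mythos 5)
Your outline is correct and is essentially the same argument the paper relies on: the paper gives no self-contained proof of Lemma \ref{CZg} but simply defers to Caffarelli \cite{C91}, Escauriaza \cite{Escauriaza}, Appendix B/Lemma 3.1 of \cite{CCKS}, \cite{KS} and Theorem 9.11 of \cite{GT}, and your freezing-plus-approximation step, paraboloid contact sets, Calder\'on--Zygmund cube decomposition, power decay of $|A_{MN^k}|$ and the summability condition $N^q\rho<1$ reproduce exactly that scheme (including the correct error term $\beta(\cdot,x_0)(|D^2w|+1)$ measured on the frozen solution). One small point to fix: in the standard iteration the opening factor $N$ is a fixed universal constant and it is the density defect $\rho$ that is made small by taking $\beta_0$ small (the lemma implicitly requires $\beta_0$ below a threshold depending on $q$), while Escauriaza's contribution is the ABP-type estimate with $L^q$ data for $q>q_E$ (via the Fabes--Stroock integrability of Green's functions) entering the approximation and contact-set measure estimates --- it is not that $\rho$ decays faster in $\beta_0$.
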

\begin{proof}
The proof follows the arguments of Theorem 9.11 in \cite{GT}, see in particular Appendix B and Lemma 3.1 of \cite{CCKS} or the arguments in Theorem 3.1 of \cite{KS}. When $G(x,D^2u)=\mathrm{Tr}(A(x)D^2u)$ the result can be found in eq. (9.40) of \cite{GT}, and the estimate depends on the modulus of continuity of $A$. The result in \cite[Theorem 1]{C91} does not depend on the modulus of continuity of $A$ when applied to linear operators. 
\end{proof}
\begin{rem}\label{weakconcave}
If $G(x_0,M)$ is concave or convex in $M$, the equation $F(x_0,D^2w)=0$ has $C^{1,1}$ estimates with a universal constant $c_e$: this is the first step towards the Evans-Krylov regularity \cite{CC}. Some other extensions under concavity conditions at infinity can be found in \cite{CaffarelliYuan,HuangAIHP}. The more recent work \cite{Gjmpa} discussed $C^{1,1}$ interior estimates when $F$ is quasiconcave and, in the parabolic regime, derived $C^{1,1}$ parabolic estimates when $F$ is quasiconcave, concave at infinity or close to a hyperplane. We also mention the analysis in Sobolev spaces $W^{2,q}$ carried out by N.V. Krylov on Bellman's equations with VMO coefficients, see \cite{KrylovVMO}.
\end{rem}
\begin{rem}\label{Isaacs}
The $W^{2,q}_{\mathrm{loc}}$ regularity for $G(x,D^2u)=g(x)$ holds also for some nonconvex/nonconcave equations in the Hessian. For instance, it holds when $G$ is the minimum, for fixed $x_0\in B_1$, between a concave and a convex operator \cite{CCjmpa}, or in the special case when 
\[
G(D^2u)=\Delta u+(u_{x_1x_1})^+-(u_{x_2x_2})_{-},
\]
see \cite{Kovats}. Finally, it holds for nonconcave/nonconvex uniformly elliptic equations with two space variables by the classical Nirenberg's result or assuming a Cordes assumption on the ellipticity, see the discussion in \cite{HuangAIHP,Gjmpa}.
\end{rem}
These Calder\'on-Zygmund estimates are the first step towards the proof of a fully nonlinear version of a regularity conjecture posed by P.-L. Lions for elliptic equations with superlinear gradient growth $\gamma>1$ \cite{Napoli}. It generally states that for strong solutions $u$ to 
\[
\Delta u\pm|Du|^\gamma=f(x)\text{ in }\Omega\subset\R^d,\text{ and }f\in L^q(\Omega)
\]
equipped with suitable (homogeneous) boundary conditions, one expects
\[
\|D^2u\|_{L^q(\Omega)}+\||Du|^\gamma\|_{L^q(\Omega)}\leq C(\|f\|_{L^q(\Omega)})
\]
provided that $q>\frac{n(\gamma-1)}{\gamma}$. In \cite{Napoli} it was discussed the validity of the estimate in the quadratic case via the Hopf-Cole transformation: this strategy applies only to equations with purely quadratic gradient terms and when $F=\Delta$. He also discussed the possible validity of a ``strong'' maximal regularity estimate where $C(\|f\|_q)=c\|f\|_q$ for some $c>0$, whose validity is in general still an open problem. These nonlinear estimates have been studied in two forms:
\begin{itemize}
\item Global, either in the case $\Omega$ is the flat torus and $f$ is a periodic forcing in $L^q$, or the problem is equipped with homogeneous Dirichlet or Neumann boundary conditions;
\item Local, if they appear as
\[
\||Du|^\gamma\|_{L^q(B_\frac12)}\leq C(\|f\|_{L^q(B_1)}).
\]
\end{itemize}
The conjecture was answered positively in \cite{CGell} for elliptic semilinear problems and in \cite{CGpar} for parabolic equations. Both these papers addressed the case of global estimates for periodic problems, with applications to Mean Field Games. Some works addressed the case of global estimates with boundary conditions: the paper \cite{GP} considered semilinear Neumann problems in convex domains, while \cite{CGL} treats quasilinear elliptic equations patterned over the $p$-Laplacian with Neumann boundary conditions, considering also the ``strong'' maximal regularity statement. The case of Dirichlet boundary conditions in the subquadratic case was the object of \cite{Gccm} and tackled via integral duality methods. Except some lower-order H\"older bounds \cite{CDLP}, the validity of maximal regularity for these equations has not been already investigated up to the boundary of the domain. see e.g. \cite[p. 4-5]{CV}. In these cases solutions are known to exhibit unnatural behaviors when the gradient has power-type growth, see e.g. the discussion in \cite{CV}. \\
As far as interior estimates are concerned, they have been addressed in \cite{CV} in the superquadratic regime $\gamma>2$, for nondivergence diffusions with $A\in W^{1,n}$, see also \cite{BardiPerthame} for earlier results when $\gamma=2$. A general interior estimate in the subquadratic and quadratic regime remains an open problem in the theory\footnote{Theorem 1.1 of the recent paper \cite{CirantWeietal} provided a unifying approach to obtain interior (nonlinear) Calder\'on-Zygmund estimates for semilinear PDEs driven by the Laplacian using blow-up methods and a delicate analysis of Morrey estimates in the whole regime $\gamma>\frac{n}{n-1}$, cf. Remark 2.2 therein.}. We give a further advance on this problem towards three directions. First, we provide a strategy to get maximal regularity estimates for problems with nonlinear diffusions and $H$ having natural growth in the gradient, but not necessarily purely quadratic. Our second aim is to weaken as much as possible the regularity requirements on the second order operator, so we consider a fully nonlinear second order operator, which is the prototype model of a nondivergence structure equation. Finally, we give a first answer concerning the validity of an interior estimate for gradient terms having natural and sub-natural growth, including the linear case $F=\Delta u$. The next summarizes the main result of the paper:
\begin{thm}\label{main}
Let $u\in W^{2,q}_{\mathrm{loc}}(\Omega)\cap L^q(\Omega)$ be a strong solution to \eqref{HJell}, and let $B_1\subset\Omega$, $q>q_E$. Assume that $F$ is uniformly elliptic and satisfies the assumptions of Lemma \ref{CZg}, with $H$ satisfying \eqref{assH}. Then we have the a priori estimate
\[
\|D^2u\|_{L^q(B_\frac12)}+\||Du|^2\|_{L^q(B_\frac12)}\leq C(\|u\|_{L^q(B_1)},\|f\|_{L^q(B_1)},q,n,\lambda,\Lambda,C_H,\beta_0).
\]
\end{thm}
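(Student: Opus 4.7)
The strategy is to combine the H\"older estimate of Corollary \ref{Holder} with the unperturbed Calder\'on--Zygmund bound of Lemma \ref{CZg}, treating the quadratic gradient term as a perturbative source to be absorbed via a Gagliardo--Nirenberg-type interpolation. The decisive point is that once $u\in C^\alpha_{\mathrm{loc}}$ with $\alpha>0$, the Miranda--Nirenberg inequality dominates $\||Du|^2\|_{L^q}$ by a \emph{strictly sublinear} power of $\|D^2u\|_{L^q}$, which opens the door to absorption.

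First, Corollary \ref{Holder} yields $[u]_{C^\alpha(B_{3/4})}\leq K$ depending on the listed quantities, for some $\alpha\in(0,1)$; by possibly shrinking $\alpha$, I may assume $\alpha\in(0,1-\tfrac{n}{2q})$. Rewriting \eqref{HJell} as $F(x,D^2u)=-H(x,u,Du)$ and applying Lemma \ref{CZg} on concentric balls $B_s\subset B_t$ with $\tfrac12\leq s<t\leq\tfrac34$ (take $R=t$, $\sigma=s/t$) produces
\[
\|D^2u\|_{L^q(B_s)}\leq \frac{C}{(t-s)^2}\bigl(\|H(\cdot,u,Du)\|_{L^q(B_t)}+\|u\|_{L^q(B_t)}\bigr).
\]
By \eqref{assH}, it remains to dominate $\||Du|^2\|_{L^q(B_t)}=\|Du\|_{L^{2q}(B_t)}^2$. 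The Miranda--Nirenberg inequality \cite{Miranda,Nirenbergint} provides
\[
\|Du\|_{L^{2q}(B_t)}\leq C\bigl(\|D^2u\|_{L^q(B_t)}^\theta\,[u]_{C^\alpha(B_t)}^{1-\theta}+[u]_{C^\alpha(B_t)}\bigr),
\]
with $\theta=(1-\tfrac{n}{2q}-\alpha)/(2-\tfrac{n}{q}-\alpha)$ determined by scaling. A direct computation shows $\theta<\tfrac12$ \emph{precisely} when $\alpha>0$, so squaring gives
\[
\||Du|^2\|_{L^q(B_t)}\leq C\,[u]_{C^\alpha(B_t)}^{2-2\theta}\,\|D^2u\|_{L^q(B_t)}^{2\theta}+C[u]_{C^\alpha(B_t)}^2,
\]
with the crucial strict bound $2\theta<1$.

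Substituting the uniform H\"older bound, the two previous displays combine into the hole-filling estimate
\[
\|D^2u\|_{L^q(B_s)}\leq \frac{K_1}{(t-s)^2}\bigl(\|D^2u\|_{L^q(B_t)}^{2\theta}+1\bigr)\quad\text{for all }\tfrac12\leq s<t\leq\tfrac34,
\]
where $K_1$ depends on $n,q,\lambda,\Lambda,C_H,\beta_0,\|u\|_{L^\infty(B_1)},\|f\|_{L^q(B_1)}$. Young's inequality with conjugate exponents $1/(2\theta)$ and $1/(1-2\theta)$ yields, for any $\varepsilon\in(0,1)$,
\[
\|D^2u\|_{L^q(B_s)}\leq \varepsilon\,\|D^2u\|_{L^q(B_t)}+\frac{K_2(\varepsilon)}{(t-s)^{2/(1-2\theta)}}.
\]
Fixing $\varepsilon<1$ and invoking the Bombieri--Giaquinta--Giusti iteration lemma (Lemma 6.1 in \cite{GT}) absorbs the first right-hand term and produces $\|D^2u\|_{L^q(B_{1/2})}\leq C$. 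Feeding this back into the interpolation inequality delivers the matching bound on $\||Du|^2\|_{L^q(B_{1/2})}$.

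The main obstacle is extracting the \emph{strict} inequality $2\theta<1$: interpolating with the endpoint $L^\infty$ norm rather than $C^\alpha$ gives exactly $\theta=\tfrac12$, leaving no room for absorption, and one would be stuck with a linear dependence on $\|D^2u\|_{L^q}$ that is unabsorbable unless $\|u\|_\infty$ is a priori small. The H\"older regularity from Corollary \ref{Holder} is therefore not a qualitative byproduct but the quantitative engine that closes the loop between the natural-growth nonlinearity and the Calder\'on--Zygmund estimate; in turn, the stability condition $\alpha<1-\tfrac{n}{2q}$ is automatic since we are free to lower $\alpha$ without affecting the rest of the proof.
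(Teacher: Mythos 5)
Your argument is correct and follows essentially the same route as the paper: the H\"older bound from Corollary \ref{Holder}, the Calder\'on--Zygmund estimate of Lemma \ref{CZg} with $g=H$, the Miranda--Nirenberg interpolation against the $C^\alpha$ seminorm to get the crucial exponent $2\theta<1$, and absorption via Young's inequality plus the standard iteration lemma on nested balls (the paper iterates on $\|Du\|_{L^{2q}}$ with a weight $(1-r)^b$ and takes $z<q$ with $\theta=\frac{1-\alpha}{2-\alpha}$ rather than fixing $z=q$ and shrinking $\alpha$, but these are equivalent parametrizations of the same inequality). The only slip is the reference: the iteration lemma you need is not Lemma 6.1 of \cite{GT} (an interpolation inequality for H\"older norms) but the Giaquinta--Giusti-type absorption lemma, e.g.\ Lemma 4.3 in \cite{HanLin}, which is what the paper invokes.
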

\begin{ex}
Theorem \ref{main} applies, for instance, to the PDEs
\[
\inf_{\nu\in I}\{a_{ij}^\nu(x)\partial_{ij}u\}+H(x,Du)=f(x)\text{ or }\sup_{\nu\in I}\{a_{ij}^\nu(x)\partial_{ij}u\}+H(x,Du)=f(x),
\]
where $A^\nu(x)=\{a_{ij}^\nu(x)\}$ satisfies the uniform ellipticity condition and the existence of a constant $\nu_0$ such that for any ball $B_r(y)\subset B_1$
\[
\frac{1}{r^n}\int_{B_r(y)}|a_{ij}^\nu(x)-a_{ij}^\nu(y)|^n\,dx\leq \nu_0,
\]
$H$ has quadratic gradient growth and bounded coefficients, with $f\in L^q$, $q>q_E$.
\end{ex}
\begin{proof}
Let $u$ be a strong solution to \eqref{HJell}. Our goal is to give an estimate of $\||Du|^2\|_{L^q(B_{1/2})}$ in terms of the norms of the data on the greater ball $B_1$. The interpolation argument is similar to \cite{CV} for superqudratic gradient terms, though we treat here the quadratic case in a slightly different manner. We will denote by $C_i$ positive constants that may change from line to line. We first use the Gagliardo-Nirenberg-Miranda inequalities, cf. Theorem $1'$ in \cite{Nirenbergint}, to conclude
\begin{equation}\label{GNineq}
\|Du\|_{L^{2q}(B_{\sigma R})}\leq C_1\|D^2u\|_{L^z(B_{\sigma R})}^\theta\|u\|_{C^\alpha(B_{\sigma R})}^{1-\theta}+C_2\|u\|_{C^\alpha(B_{\sigma R})}\ ,
\end{equation}
for $\theta\in[\frac{1-\alpha}{2-\alpha},1)$, $q,z>1$ satisfying
\[
\frac{1}{2 q}=\frac1n+\theta\left(\frac1z-\frac2n\right)-(1-\theta)\frac{\alpha}{n},
\]
where $C_1,C_2$ do not depend on $R$. We apply the above inequality with $\theta=\frac{1-\alpha}{2-\alpha}$ satisfying $2\theta<1$ for any $\alpha\in(0,1)$. Note in particular that
\[
z=q\frac{2-2\alpha}{2-\alpha}<q,
\]
so for $\sigma R<1$ we have
\[
\|D^2u\|_{L^z(B_{\sigma R})}\leq \tilde{C}_1\|D^2u\|_{L^q(B_{\sigma R})}.
\]
Then, by regarding $u$ as a solution to the fully nonlinear Hessian equation
\[
F(x,D^2u)=H(x,u,Du)
\]
 we can apply \eqref{ineqGT} with $g=H$, use the growth conditions on $H$ and \eqref{GNineq} to conclude for $\sigma R<1$
\[
\|Du\|_{L^{2q}(B_{\sigma R})}\leq \frac{C_3}{(1-\sigma)^{2\theta}}\left(\|Du\|_{L^{2q}(B_{R})}^2+\|f\|_{L^q(B_R)}+\frac{\|u\|_{L^q(B_R)}}{R^2}\right)^\theta\|u\|_{C^\alpha(B_{\sigma R})}^{1-\theta}+C_2\|u\|_{C^\alpha(B_{\sigma R})}.
\]
Take $r\in(0,1)$, $R=\frac{1+r}{2}\in(\frac12,1)$ and $\sigma=\frac{r}{R}=\frac{2r}{1+r}$ (note that $\sigma\in(0,1)$). Then, $1-\sigma=\frac{1-r}{1+r}$. In view of the inequality \[(a+b)^\theta\leq a^\theta+b^\theta,\ \theta\in(0,1)\]
we conclude
\begin{equation}\label{eq1}
\|Du\|_{L^{2q}(B_r)}\leq C_3\frac{(1+r)^{2\theta}}{(1-r)^{2\theta}}\left(\|Du\|_{L^{2q}(B_{R})}^{2\theta}+\|f\|_{L^q(B_R)}^\theta+\frac{\|u\|^\theta_{L^q(B_R)}}{R^{2\theta}}\right)\|u\|_{C^\alpha(B_{\sigma R})}^{1-\theta}+C_2\|u\|_{C^\alpha(B_{\sigma R})}.
\end{equation}
We multiply \eqref{eq1} by $(1-r)^b$, $b>0$ to be chosen, and get
\begin{multline*}
(1-r)^b\|Du\|_{L^{2q}(B_r)}\leq C_3(1+r)^{2\theta}(1-r)^{b-2\theta}\left(\|Du\|_{L^{2q}(B_{R})}^{2\theta}+\|f\|_{L^q(B_R)}^\theta+\frac{\|u\|^\theta_{L^q(B_R)}}{R^{2\theta}}\right)\|u\|_{C^\alpha(B_{\sigma R})}^{1-\theta}\\
+C_2\|u\|_{C^\alpha(B_{\sigma R})}.
\end{multline*}
Then, we obtain
\begin{multline*}
(1-r)^b\|Du\|_{L^{2q}(B_r)}\leq C_3\left[\frac{2^{2\theta}(1-r)^{b-2\theta}}{\left(1-\frac{1+r}{2}\right)^{2b\theta}}\left(1-\frac{1+r}{2}\right)^{2b\theta}\|Du\|_{L^{2q}(B_{R})}^{2\theta}+\|f\|_{L^q(B_R)}^\theta\right.\\
\left.+\frac{\|u\|^\theta_{L^q(B_R)}}{R^{2\theta}}\right]\|u\|_{C^\alpha(B_{\sigma R})}^{1-\theta}+C_2\|u\|_{C^\alpha(B_{\sigma R})}\\
=C_3\left\{2^{2\theta+2b\theta}(1-r)^{b-2\theta-2b\theta}\left[\left(1-R\right)^{b}\|Du\|_{L^{2q}(B_{R})}\right]^{2\theta}\right.\\
\left.+\|f\|_{L^q(B_R)}^\theta+\frac{\|u\|^\theta_{L^q(B_R)}}{R^{2\theta}}\right\}\|u\|_{C^\alpha(B_{\sigma R})}^{1-\theta}+C_2\|u\|_{C^\alpha(B_{\sigma R})}.
\end{multline*}

We set $\Phi(r):=(1-r)^b\|Du\|_{L^{2q}(B_r)}$, $r\in (0,1)$, and set $b=\frac{2\theta}{1-2\theta}>2\theta$ to get 
\[
\Phi(r)\leq C_4\left[\left(1-R\right)^{b}\|Du\|_{L^{2q}(B_{R})}\right]^{2\theta}\|u\|_{C^\alpha(B_{\sigma R})}^{1-\theta}+C_5= C_4(\Phi(R))^{2\theta}\|u\|_{C^\alpha(B_{\sigma R})}^{1-\theta}+C_5,
\]
where
\[
C_5:=C_3\|f\|_{L^q(B_R)}^\theta\|u\|_{C^\alpha(B_{\sigma R})}^{1-\theta}+C_3\frac{\|u\|^\theta_{L^q(B_R)}}{R^{2\theta}}\|u\|_{C^\alpha(B_{\sigma R})}^{1-\theta}+C_2\|u\|_{C^\alpha(B_{\sigma R})}.
\]
We apply the generalized Young's inequality (being $2\theta<1$) to conclude that
\[
\Phi(r)\leq \frac12\Phi(R)+\frac{1}{(R-r)^{2\theta}}\underbrace{C_3\|u\|^\theta_{L^q(B_R)}\|u\|_{C^\alpha(B_{\sigma R})}^{1-\theta}}_{C_7}+\underbrace{C_6(\|u\|_{C^\alpha(B_{\sigma R})}^\frac{1-\theta}{1-2\theta}+\|u\|_{C^\alpha(B_{\sigma R})})}_{C_8},
\]
from which we deduce by Lemma 4.3 in \cite{HanLin} (which applies to inequalities of the form $\Phi(r)\leq \mu \Phi(R)+\frac{K_1}{(R-r)^\eta}+K_2$, $0<r<R<1$, $0<\mu<1$, $\eta>0$) the bound
\[
\Phi(r)\leq C_9\left(\frac{C_7}{(R-r)^{2\theta}}+C_8\right),\ r\in(0,1).
\]
In view of the above choice of $\theta=\frac{1-\alpha}{2-\alpha}$ satisfying $2\theta<1$ for any $\alpha\in(0,1)$, we use the bound on $\|u\|_{C^\alpha(B_{\sigma R})}$ from Corollary \ref{Holder} valid for $q>q_E$ (note that it can be applied since strong solutions are $L^p$ viscosity solutions, see Proposition 2-(i) in \cite{SwiechDCDS}). We then take $r=1/2$ (hence $R=\frac34$ and $\sigma=\frac12$) and get the estimate
\[
\|D^2u\|_{L^{q}(B_\frac12)}+\||Du|^2\|_{L^{q}(B_\frac12)}\leq C(\|u\|_{L^q(B_1)},\|f\|_{L^q(B_1)},q,n,\lambda,\Lambda,C_H,\beta_0).
\]
\end{proof}

\begin{rem}
Regularity estimates in $C^{1,\alpha}$ and $W^{2,q}$ starting from $L^p$ viscosity solutions of more general equations with quadratic growth and $p>n$ can be found in Theorem 5.1 of \cite{SwiechDCDS}. Theorem \ref{main} is stated as an a priori estimate, and it does not give a regularity estimate as in \cite{SwiechDCDS}, which takes into account even unbounded coefficients. 
\end{rem}
\begin{rem}
Some comments concerning the existence of strong solutions are in order. Though some existence results for equations with quadratic or subquadratic gradient terms are available (with possibly unbounded coefficients), an explicit strong solvability result for the equation \eqref{HJell} is not available in the literature. However, we believe that the scheme of \cite{KoikeSwiechJapan} could be a starting point for proving an existence result via the interior bound in Theorem \ref{main}. This would require the validity of a H\"older bound as in Corollary \ref{Holder} up to the boundary under suitable geometric conditions on the domain. These were shown to hold in the case of linear gradient growth in Theorem 6.2 of \cite{KoikeSwiechJapan} and in Theorem 2 of \cite{Sirakov} for the quadratic growth with $f\in L^n$. We refer also to \cite{CDLP} for H\"older bounds of subsolutions to equations with superquadratic growth and $f\in L^\infty$. This topic will be the matter of a future research.  
\end{rem}

\begin{rem}
The subquadratic case follows similarly by interpolation. In this case one only needs to interpolate $\||Du|^\gamma\|_{L^q}$ with the $L^\infty$ norm of $u$. More precisely, we have
\[
\|Du\|_{L^{\gamma q}(B_{\sigma R})}^\gamma\leq C\|Du\|_{L^{2q}(B_{\sigma R})}^\gamma\leq C_1\|D^2u\|_{L^q(B_{\sigma R})}^\frac{\gamma}{2}\|u\|_{L^\infty(B_{\sigma R})}^{\frac{\gamma}{2}}+C_2\|u\|_{L^\infty(B_{\sigma R})}^\gamma\ .
\]
One can then exploit the weighted Young's inequality since $\gamma<2$ and conclude the maximal regularity estimate. Note that in the case of fully nonlinear diffusions
\[
q>q_E\geq \frac{n}{2}>\frac{n(\gamma-1)}{\gamma},\ 1<\gamma<2.
\]
Lower order estimates, even at the level of $C^\alpha$ spaces, in this setting can be found in \cite{KoikeADE}. Our technique fails in the superquadratic regime, as an explicit H\"older exponent is needed to run the argument. We also note that when $\gamma>2$ one could expect maximal regularity when
\[
q>\max\left\{q_E,\frac{n(\gamma-1)}{\gamma}\right\},
\]
but this remains open for general uniformly elliptic operators $F$.
\end{rem}
\begin{rem}
When $F=\Delta$ we have $q_E=n/2$, cf. \cite{Escauriaza}. Any $u\in W^{2,q}$ is also an energy solution, and one can apply the H\"older regularity of Corollary 4.23 in \cite{HanLin} that is based on a similar strategy, but exploits the De Giorgi-Nash-Moser theory, see Remark \ref{div}. This result, combined with the approach of Theorem \ref{main}, gives a new interior maximal regularity result for the viscous problem
\[
-\Delta u+H(Du)=f(x)\in L^q.
\]
\end{rem}
\begin{rem}
One can prove more general maximal regularity estimates and H\"older regularity results for equations having natural growth in the gradient and unbounded coefficients in $L^s$ spaces. One can also consider more general lower-order terms with linear gradient growth. Weak Harnack inequalities as well as local maximum principles can be found in \cite{KSjfpta,KoikeSwiechJapan,KSjmpa} and references therein.
\end{rem}

The extension to the parabolic case is straightforward. One can use the weak Harnack inequality and the local maximum principle from \cite{Wang1} along with the analogue of \eqref{ineqGT} for parabolic equations, see e.g. p.170 in \cite{Lieberman}. Note that the latter can be extended to the fully nonlinear setting as in Proposition 3.5 of \cite{KS}. We conclude with the statement, without proof, of the maximal regularity result for the parabolic equation \eqref{HJpar} (here $W^{2,1}_q=\{\partial_t u,u,Du,D^2u\in L^q\}$ with $q>q_E\in[\frac{n+2}{2},n+1)$). This also provides the first interior maximal $L^q$-regularity estimate for equations driven by the heat operator and quadratic gradient growth: results in this direction for the subquadratic and superquadratic growth case can be found in \cite{CGpar,C} by integral methods.
\begin{thm}
Let $u\in W^{2,1}_{q,\mathrm{loc}}(Q_1)\cap L^q(Q_1)$ be a strong solution to \eqref{HJpar}. Assume that $F$ is uniformly parabolic and that the equation $F(x,t,D^2u)-\partial_t u=g(x,t)$ admits parabolic interior $W^{2,1}_q$ estimates. Assume also that $H$ satisfies \eqref{assHpar}. Then we have the a priori estimate
\[
\|\partial_t u\|_{L^q(Q_\frac12)}+\|D^2u\|_{L^q(Q_\frac12)}+\||Du|^2\|_{L^q(Q_\frac12)}\leq C(\|u\|_{L^q(Q_1)},\|f\|_{L^q(Q_1)},q,n,\lambda,\Lambda,C_H,\beta_0).
\]
\end{thm}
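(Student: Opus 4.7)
The plan is to mirror the proof of Theorem \ref{main} in the parabolic setting, with the Calder\'on-Zygmund step \eqref{ineqGT} replaced by the assumed parabolic interior $W^{2,1}_q$-estimate for the homogeneous Hessian-plus-time-derivative equation $F(x,t,D^2u)-\partial_t u=g(x,t)$, and the elliptic H\"older bound of Corollary \ref{Holder} replaced by its parabolic counterpart, which has to be produced first. Since the structural ingredients are available in the parabolic literature (Wang's weak Harnack inequality and local maximum principle \cite{Wang1,Wang2}, the parabolic $W^{2,1}_q$-estimate quoted from \cite{Lieberman} and extended in \cite{KS}, and the parabolic Gagliardo--Nirenberg interpolation), the argument follows the same four-step template.

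First I would establish a parabolic analogue of Theorem \ref{Harnack} and hence a parabolic H\"older bound. Apply the exponential changes $v=\tfrac{1}{\beta}(e^{\beta u}-1)$ and $w=\tfrac{1}{\beta}(1-e^{-\beta u})$ with $\beta=C_H/\lambda$: exactly as in the elliptic proof, $v$ is a subsolution and $w$ a supersolution of parabolic equations driven by $\mathcal{M}^\pm_{\lambda,\Lambda}(D^2\cdot)-\partial_t\cdot$ with a zero-order coefficient and right-hand side in $L^q(Q_1)$. Applying the parabolic local maximum principle and weak Harnack inequality on cylinders from \cite{Wang1,Wang2} exactly as in the proof of Theorem \ref{Harnack}, and then running the standard oscillation decay argument, yields a parabolic H\"older estimate
\[
\|u\|_{C^{\alpha,\alpha/2}(Q_{3/4})}\leq C(\|u\|_{L^\infty(Q_1)},\|f\|_{L^q(Q_1)},q,n,\lambda,\Lambda,C_H)
\]
for some $\alpha\in(0,1)$.

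Next I would interpolate. Viewing $u$ as a strong solution of $F(x,t,D^2u)-\partial_t u=-H(x,t,u,Du)$ and combining the assumed parabolic $W^{2,1}_q$-estimate with \eqref{assHpar} gives, on cylinders $Q_{\sigma R}\subset Q_R\subset Q_1$,
\[
\|\partial_t u\|_{L^q(Q_{\sigma R})}+\|D^2u\|_{L^q(Q_{\sigma R})}\leq \frac{C}{(1-\sigma)^2R^2}\Bigl(R^2\||Du|^2\|_{L^q(Q_R)}+R^2\|f\|_{L^q(Q_R)}+\|u\|_{L^q(Q_R)}\Bigr).
\]
The parabolic Gagliardo--Nirenberg--Miranda inequality then bounds $\|Du\|_{L^{2q}(Q_{\sigma R})}$ by $\|D^2u\|_{L^z(Q_{\sigma R})}^\theta\|u\|_{C^{\alpha,\alpha/2}(Q_{\sigma R})}^{1-\theta}$ plus a lower-order term, with $\theta=\tfrac{1-\alpha}{2-\alpha}<\tfrac12$ and $z<q$. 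Exactly as in the proof of Theorem \ref{main}, feeding these two estimates into each other and using $(a+b)^\theta\leq a^\theta+b^\theta$ for $\theta\in(0,1)$ produces, after the choices $R=\tfrac{1+r}{2}$, $\sigma=r/R$, an inequality for the weighted quantity $\Phi(r):=(1-r)^b\|Du\|_{L^{2q}(Q_r)}$ with $b=\tfrac{2\theta}{1-2\theta}$ of the form
\[
\Phi(r)\leq C_4\,\Phi(R)^{2\theta}\|u\|_{C^{\alpha,\alpha/2}(Q_{\sigma R})}^{1-\theta}+C_5.
\]

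Finally I would apply the generalized Young inequality (legitimate since $2\theta<1$) to absorb $\Phi(R)^{2\theta}$ into $\tfrac12\Phi(R)+C_7$ and conclude by the iteration lemma (Lemma 4.3 in \cite{HanLin}, whose parabolic version applies verbatim to cylinders) that $\Phi(1/2)\leq 2C_7$. Choosing $r=1/2$, $R=3/4$, $\sigma=1/2$ and plugging back into the parabolic Calder\'on--Zygmund estimate above delivers the desired a priori bound on $\|\partial_t u\|_{L^q(Q_{1/2})}+\|D^2u\|_{L^q(Q_{1/2})}+\||Du|^2\|_{L^q(Q_{1/2})}$. The main obstacle, and essentially the only new analytic point beyond a transcription of the elliptic proof, is producing the parabolic H\"older estimate with the right dependence on $\|f\|_{L^q(Q_1)}$ when $q_E<q<n+1$; this is the reason it is the first step, and it relies crucially on the fact that the exponential transforms $v$ and $w$ still linearize the gradient nonlinearity in the time-dependent case, so that Wang's parabolic half-Harnack inequalities apply directly.
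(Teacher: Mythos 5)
Your proposal is correct and follows exactly the route the paper indicates: the paper states this theorem without proof, remarking only that one combines Wang's parabolic weak Harnack inequality and local maximum principle (applied after the same exponential transforms) with the parabolic analogue of \eqref{ineqGT} from \cite{Lieberman,KS} and then repeats the interpolation--iteration scheme of Theorem \ref{main}. Your writeup simply fleshes out that sketch, including the correct identification of the parabolic H\"older step as the only genuinely new ingredient.
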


\providecommand{\bysame}{\leavevmode\hbox to3em{\hrulefill}\thinspace}
\providecommand{\MR}{\relax\ifhmode\unskip\space\fi MR }

\providecommand{\MRhref}[2]{
  \href{http://www.ams.org/mathscinet-getitem?mr=#1}{#2}
}
\providecommand{\href}[2]{#2}

\end{document}